\newtheorem{theorem}{Theorem}
\newtheorem{proposition}[theorem]{Proposition}
\newtheorem{corollary}[theorem]{Corollary}
\theoremstyle{definition}
\newtheorem{definition}[theorem]{Definition}
\newtheorem{example}[theorem]{Example}
\def\A{\mathcal{A}}
\def\D{\mathcal{D}}
\def\L{\mathcal{L}}
\def\U{\mathcal{U}}
\def\V{\mathcal{V}}
\def\Dyck{\mathscr{D}}
\title{On factor-free Dyck words with half-integer slope}
\author{Daniel Birmajer}
\address{Nazareth College, 4245 East Ave., Rochester, NY 14618}
\author{Juan B. Gil}
\address{Penn State Altoona, 3000 Ivyside Park, Altoona, PA 16601}
\author{Michael D. Weiner}
\begin{document}
\maketitle

\begin{abstract}
We study a class of rational Dyck paths with slope $\frac{2m+1}{2}$ corresponding to factor-free Dyck words, as introduced by P.~Duchon. We show that, for the slopes considered in this paper, the language of factor-free Dyck words is generated by an auxiliary language that we examine from the algebraic and combinatorial points of view. We provide a lattice path description of this language, and give an explicit enumeration formula in terms of partial Bell polynomials. As a corollary, we obtain new formulas for the number of associated factor-free generalized Dyck words.
\end{abstract}

\section{Introduction}
\label{sec:intro}

In these notes we consider the set of factor-free words belonging to the generalized Dyck language constructed from the alphabet $\A=\{a, b\}$ with valuations $h(a) = 2m+1$, $m\in\mathbb{N}$, and $h(b)=-2$. This is an instance of the two-letter Dyck language studied by P.~Duchon for which words correspond to Dyck paths with rational slope. In the case at hand, the slope is $\frac{2m+1}{2}$. 

In \cite{Duchon}, Duchon provided an algebraic grammar for generalized Dyck languages (as introduced by Labelle and Yeh \cite{LaYe90}) and proved that words in such a language can be obtained uniquely by inserting words of the language into factor-free words of the same language. 

Recall that a \emph{factor} of a word $w$ is any word $w^\prime$ such that $w = w_1 w^\prime w_2$. If $w_1$ and $w_2$ are not both the empty word, $w^\prime$ is a \emph{proper factor} of $w$. For a given alphabet $\A$ we let $\A^*$ denote the set of all words made from $\A$, together with the empty word $\varepsilon$. A word $w$ in $\A^*$  is said to be a \emph{generalized Dyck word} if it satisfies the conditions that $h(w) = 0$, and for each left factor $w_1$ of $w$, $h(w_1) \ge 0$. We denote by $\Dyck_{\A,h}$ the set of generalized Dyck words over the alphabet $\A$ with valuation given by $h$.  Moreover, we say that a word in $\A^*$ is $\Dyck_{\A,h}$-factor-free (or simply factor-free if the underlying Dyck language is clear) if it has no proper factor belonging to $\Dyck_{\A,h}$. The set of factor-free words in $\Dyck_{\A,h}$ will be denoted by $D_{\A,h}$.

As shown in \cite[Section 5]{Duchon}, the algebraic grammars for $\Dyck_{\A,h}$ and $D_{\A,h}$ can be described by a system of derivation rules in terms of certain auxiliary languages with restrictions on their total and partial valuations. As we will see in Section~\ref{sec:preliminaries}, for Dyck words with slope $\frac{2m+1}{2}$, the aforementioned derivation rules may be reduced to a single core language that we denote by $U$, or $U_{\frac{2m+1}{2}}$ if we wish to emphasize the slope.

The main focus of this paper is to study the auxiliary language $U$ from the algebraic and combinatorial points of view. We provide a description of $U$ in terms of lattice paths and, based on a polynomial equation satisfied by the generating function, we give an explicit enumeration formula involving partial Bell polynomials. As a corollary, we obtain new formulas for the enumeration of the corresponding factor-free generalized Dyck words. In Section \ref{sec:examples} we illustrate our results for slopes $\frac32$ and $\frac52$. In particular, we discuss a bijection between the elements of $U_{\frac52}$ and certain colored trees having nonleaf nodes of outdegrees 2 or 4. In the last section of the paper, we provide the building blocks needed to create factor-free words with slope $\frac72$, we give an interesting connection between the auxiliary language $U$ and certain colored Dyck paths, and we briefly discuss the use of factor-free Dyck words to generate cross-bifix-free (non-overlapping) codes of binary words with variable length.
 
\section{Grammar for factor-free words and the auxiliary language \texorpdfstring{$U$}{U}}
\label{sec:preliminaries}

In this section, we will review some of the terminology introduced by Duchon in \cite{Duchon} and will discuss the reduced algebraic grammar for the two-letter sublanguage of factor-free Dyck words with slope $\frac{2m+1}{2}$. In addition, we define the language $U$ alluded to in the introduction and describe its elements in terms of lattice paths. 
 
First, consider the auxiliary languages $L_i$ and $R_j$ defined as follows: 
\begin{itemize}
  \item $L_i$ is the set of factor-free words $w \in \A^*$ with total valuation $i$, such that each nonempty left factor $w_1$ of $w$ has $h(w_1) > i$,
  \item $R_j$ is the set of factor-free words $w \in \A^*$ with total valuation $-j$, such that each nonempty left factor $w_1$ of $w$ has $h(w_1) > 0$.
\end{itemize}

Using Duchon's results \cite[Section 5]{Duchon}, we conclude that if $\A=\{a, b\}$ is a two-letter alphabet with valuations $h(a) = 2m+1$ and $h(b)=-2$, then the set $D=D_{\A,h}$ of factor-free Dyck words in $\A^*$ may be described via the derivation rules:
\begin{gather*}
  D = \varepsilon + L_1R_1 + L_2R_2,\\
  L_i =  L_{i+1}R_1 + L_{i+2} R_2 \;\text{ for } 1\le i \le 2m-1,\\
  L_{2m} = L_{2m+1}R_1, \quad L_{2m+1} = a, \\
  R_1 = L_1R_2, \quad R_2 = b,
\end{gather*}  
which can be reduced to
\begin{equation}\label{eq:LR}
\begin{gathered}
  D = \varepsilon + L_1L_1b + L_2b,\\
  L_{2m+1} = a, \; L_{2m} = aL_1b, \\
  L_i =  L_{i+1}L_1b + L_{i+2}b \:\text{ for } 1\le i \le 2m-1. 
\end{gathered}  
\end{equation}
This is an unambiguous context-free grammar.

\begin{definition}
Let $U=U_{\frac{2m+1}{2}}$ be the set consisting of the empty word $\varepsilon$ together with all factor-free words $w \in \A^*$ with total valuation $0$, having at least one left factor with negative valuation, and such that each left factor $w_1$ of $w$ has $h(w_1) > -2m$.
\end{definition}

By identifying the letter $a$ with an east-step $(1,0)$ and the letter $b$ with a north-step $(0,1)$, each nonempty word in $U$ corresponds to an east-north lattice path from $(0,0)$ to the line $y=\frac{2m+1}{2}x$ with the following properties:
\begin{itemize}
\itemsep0pt
\item[$\triangleright$] it has no two lattice points on a line with slope $\frac{2m+1}{2}$ such that the path connecting them lies completely below that line (factor-free),
\item[$\triangleright$] it crosses the line $y=\frac{2m+1}{2}x$ at least once,
\item[$\triangleright$] it stays strongly below the line $y=\frac{2m+1}{2}x+m$.
\end{itemize}

Observe that, factoring $L_1=aUb^m$, the derivation rules \eqref{eq:LR} give
\begin{equation}\label{eq:U}
\begin{gathered}
  D = \varepsilon + aUb^m aUb^{m+1} + L_2b, \\
  L_{2m+1} = a, \; L_{2m} = aaUb^{m+1}, \\ 
  L_i =  L_{i+1} aUb^{m+1} + L_{i+2}b \:\text{ for } 1\le i \le 2m-1. 
\end{gathered}  
\end{equation}

In other words, the set of factor-free Dyck words with slope $\frac{2m+1}{2}$ is completely determined by the auxiliary language $U$. Note that the length of a word in $U$ is necessarily a multiple of $2m+3$.

\begin{example}[slope $\frac32$]
If $m=1$, then the derivation rules \eqref{eq:U} become
\begin{gather*}
  D = \varepsilon + aUbaUbb + L_2b, \\
  L_{3} = a, \; L_{2} = aaUbb, \\ 
  L_1 =  L_{2} aUbb + L_{3}b,
\end{gather*}  
which yield the equations
\begin{equation} \label{eq:U_recurrence32}
\begin{gathered} 
 D = \varepsilon + aUbaUbb + aaUbbb, \\
 U = \varepsilon + aUbbaUb.
\end{gathered}
\end{equation}
In particular, $abbab$ is the only word in $U$ of length 5, and the words in $U$ of length 10 are obtained by inserting $abbab$ into itself according to \eqref{eq:U_recurrence32}. Thus we obtain the two words $a{\color{blue}abbab}bbab$ and $abba{\color{blue}abbab}b$, see Figure~\ref{fig:figure1}.

\medskip
\begin{figure}[!ht]
\begin{center}
\begin{tikzpicture}[scale=0.5]
\begin{scope}
\draw [step=1,thin,gray!40] (0,0) grid (2,3);
\draw [dotted,thick] (0,1) -- (1.3,1.3*1.5+1);
\draw [gray!60, thick] (0,0) -- (2,3);
\draw [very thick] (0,0) -- (1,0) -- (1,2) -- (2,2) -- (2,3);
\end{scope}
\begin{scope}[xshift=40mm]
\draw [step=1,thin,gray!40] (0,0) grid (4,6);
\draw [dotted,thick] (0,1) -- (3.3,3.3*1.5+1);
\draw [gray!60, thick] (0,0) -- (4,6);
\draw [very thick] (0,0) -- (2,0) -- (2,2) -- (3,2) -- (3,5) -- (4,5) -- (4,6);
\draw [blue!80,very thick] (1,0) -- (2,0) -- (2,2) -- (3,2) -- (3,3);
\end{scope}
\begin{scope}[xshift=100mm]
\draw [step=1,thin,gray!40] (0,0) grid (4,6);
\draw [dotted,thick] (0,1) -- (3.3,3.3*1.5+1);
\draw [gray!60, thick] (0,0) -- (4,6);
\draw [very thick] (0,0) -- (1,0) -- (1,2) -- (3,2) -- (3,4) -- (4,4) -- (4,6);
\draw [blue!80,very thick] (2,2) -- (3,2) -- (3,4) -- (4,4) -- (4,5);
\end{scope}
\end{tikzpicture}
\caption{Lattice paths corresponding to $abbab$, $a{\color{blue}abbab}bbab$, $abba{\color{blue}abbab}b \in U_{\frac32}$.}
\label{fig:figure1}
\end{center}
\end{figure}
As already pointed out in \cite{Duchon}, in the case of slope $\frac32$ the elements of $U$ are enumerated by the Catalan numbers.
\end{example}

\begin{example}[slope $\frac52$] \label{ex:slope5/2}
If $m=2$, then \eqref{eq:U} gives
\begin{gather*}
  D = \varepsilon + aUbb aUbbb + L_2b, \\
  L_{5} = a, \; L_{4} = aaUbbb, \; L_3 =  L_{4} aUbbb + L_{5}b, \\ 
  L_2 =  L_{3} aUbbb + L_{4}b, \; L_1 =  L_{2} aUbbb + L_{3}b,
\end{gather*}  
which yield
\begin{equation} \label{eq:U_recurrence52}
\begin{gathered}
  D = \varepsilon + aUbb aUbbb + a(aUbbb)^3 b + abaUbbbb + aaUbbbbb, \\
  U = \varepsilon + (aUbbb)^3aUb + baUbbbaUb + aUbbbbaUb + aUbbbaUbb.
\end{gathered}
\end{equation}
In particular, $babbbab$, $abbbbab$, and $abbbabb$ are the only words of length 7 in $U$, and $abbbabbbabbbab$ is the only word of length 14 in $U$ that cannot be derived from the three primitive words of length 7, see Figure~\ref{fig:figure2}.
\begin{figure}[!ht]
\begin{center}
\begin{tikzpicture}[scale=0.5]
\begin{scope}[yshift=15mm]
\draw [step=1,thin,gray!40] (0,0) grid (2,5);
\draw [dotted,thick] (0,2) -- (1.2,1.2*2.5+2);
\draw [gray!60, thick] (0,0) -- (2,5);
\draw [very thick] (0,0) -- (0,1) -- (1,1) -- (1,4) -- (2,4) -- (2,5);
\end{scope}
\begin{scope}[xshift=40mm,yshift=15mm]
\draw [step=1,thin,gray!40] (0,0) grid (2,5);
\draw [dotted,thick] (0,2) -- (1.2,1.2*2.5+2);
\draw [gray!60, thick] (0,0) -- (2,5);
\draw [very thick] (0,0) -- (1,0) -- (1,4) -- (2,4) -- (2,5);
\end{scope}
\begin{scope}[xshift=80mm,yshift=15mm]
\draw [step=1,thin,gray!40] (0,0) grid (2,5);
\draw [dotted,thick] (0,2) -- (1.2,1.2*2.5+2);
\draw [gray!60, thick] (0,0) -- (2,5);
\draw [very thick] (0,0) -- (1,0) -- (1,3) -- (2,3) -- (2,5);
\end{scope}
\begin{scope}[xshift=120mm,scale=0.9]
\draw [step=1,thin,gray!40] (0,0) grid (4,10);
\draw [dotted,thick] (0,2) -- (3.21,3.21*2.5+2);
\draw [gray!60, thick] (0,0) -- (4,10);
\draw [very thick] (0,0) -- (1,0) -- (1,3) -- (2,3) -- (2,6) -- (3,6) -- (3,9) -- (4,9) -- (4,10);
\end{scope}
\end{tikzpicture}
\caption{Lattice paths for $babbbab$, $abbbbab$, $abbbabb$, $abbbabbbabbbab \in U_{\frac52}$.}
\label{fig:figure2}
\end{center}
\end{figure}
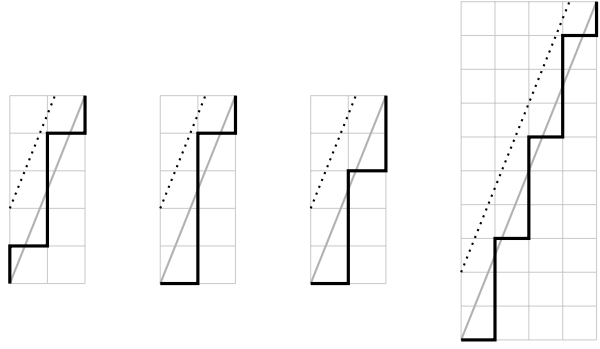

For some enumerative and asymptotic results for rational Dyck paths with slope $\frac52$, see \cite{BanWal15}.
\end{example}

\section{Enumeration of \texorpdfstring{$U$}{U} and factor-free \texorpdfstring{$\frac{2m+1}{2}$}{(2m+1)/2}-Dyck words}
\label{sec:enumeration}
  
In this section we prove a polynomial equation satisfied by the generating function $\U(\tau)$ of $U$, and use it to find an enumeration formula in terms of partial Bell polynomials. As a corollary, we obtain formulas for the enumeration of the corresponding factor-free generalized Dyck words.

Recall that the auxiliary languages $L_i$ satisfy:
\begin{gather*}
  L_{2m+1} = a, \; L_{2m} = aL_1b, \\
  L_i =  L_{i+1}L_1b + L_{i+2}b \:\text{ for } 1\le i \le 2m-1.
\end{gather*}  
This is an unambiguous grammar, so the above equations map to algebraic equations for the corresponding generating functions $\L_i(\tau)$:
\begin{equation}\label{eq:gf_tau}
\begin{gathered}
  \L_{2m+1}(\tau) = \tau, \;\; \L_{2m}(\tau) = \tau^2 \L_1(\tau), \\
  \L_i(\tau) =  \tau \L_1(\tau)\L_{i+1}(\tau) + \tau \L_{i+2}(\tau) \;\text{ for } 1\le i \le 2m-1.
\end{gathered}  
\end{equation}

\begin{proposition} \label{prop:L_gf}
The following relations hold:
\begin{equation*}
  \L_1(\tau) = \!\sum_{j=0}^{m} \tbinom{m + j}{m -j} \tau^{j+m+1} \L_1^{2j}(\tau) \text{ and }
  \L_2(\tau) = \!\sum_{j=0}^{m-1} \tbinom{m + j}{m-j-1}\tau^{j+m+1} \L_1^{2j+1}(\tau).
\end{equation*}
\end{proposition}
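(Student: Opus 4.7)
The plan is to treat the system \eqref{eq:gf_tau} as a second-order linear recurrence running backwards from $\L_{2m+1}$ down to $\L_1$, solve it in closed form via a one-variable generating function, and then extract the two cases $\L_1$ and $\L_2$.

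First, I would reindex by setting $M_k := \L_{2m+1-k}(\tau)$ for $0 \le k \le 2m$. The boundary values $\L_{2m+1}=\tau$ and $\L_{2m}=\tau^2\L_1(\tau)$ become $M_0=\tau$ and $M_1=\tau^2\L_1$, while \eqref{eq:gf_tau} turns into the constant-coefficient recursion
\[ M_k = \tau \L_1 M_{k-1} + \tau M_{k-2}, \qquad 2 \le k \le 2m, \]
in which $\L_1$ is treated as a formal parameter. Extending this recurrence to all $k\ge 2$ and introducing $F(y) := \sum_{k\ge 0} M_k y^k$, the standard translation of the recurrence and its initial conditions would yield
\[ F(y) \;=\; \frac{\tau}{1 - \tau \L_1 y - \tau y^2}. \]
Expanding the geometric series and applying the binomial theorem to $(\tau \L_1 y + \tau y^2)^n$, I would collect powers of $y$ to obtain the closed form
\[ M_N \;=\; \sum_{k=0}^{\lfloor N/2 \rfloor} \binom{N-k}{k}\, \tau^{N-k+1}\, \L_1^{N-2k}. \]

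Finally, I would specialize this closed form to $N=2m$ (giving $\L_1=M_{2m}$) and $N=2m-1$ (giving $\L_2=M_{2m-1}$), and reindex the sums via $j = m-k$ and $j = m-1-k$ respectively. The identities $\binom{2m-k}{k}=\binom{m+j}{m-j}$ and $\binom{2m-1-k}{k}=\binom{m+j}{m-j-1}$ together with the matching exponents $N-k+1 = m+j+1$ and $N-2k \in \{2j, 2j+1\}$ then turn the closed form into the two formulas asserted in the proposition.

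I do not expect a serious obstacle here: the recurrence is a standard Chebyshev/Fibonacci-type relation, the generating function is rational with quadratic denominator, and the coefficient extraction is routine. The one point worth flagging is that the identity for $\L_1$ is implicit, since $\L_1 = M_{2m}$ appears on both sides of its formula; but this self-referential character is precisely the polynomial equation the proposition aims to establish, rather than an issue to resolve during the derivation.
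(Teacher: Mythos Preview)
Your proposal is correct and follows essentially the same strategy as the paper: reindex via $M_k=\L_{2m+1-k}$, recognize the resulting constant-coefficient recurrence $M_k=\tau\L_1 M_{k-1}+\tau M_{k-2}$, solve it in closed form, and specialize to $k=2m$ and $k=2m-1$. The only difference is in the middle step: the paper obtains the closed form $\ell_n=\sum_{j\ge n/2}^{n}\binom{j}{n-j}\tau^{j+1}\L_1^{2j-n}$ by citing an external result on linear recurrences and Bell polynomials (their reference [BGW14b, Prop.~1], together with the identity $B_{n,j}(c_1,2c_2,0,\dots)=\frac{n!}{j!}\binom{j}{n-j}c_1^{2j-n}c_2^{n-j}$), whereas you derive the equivalent expression directly via the rational generating function $F(y)=\tau/(1-\tau\L_1 y-\tau y^2)$ and a binomial expansion. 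Your route is more self-contained and arguably more transparent for this routine step; the paper's citation buys consistency with the Bell-polynomial machinery used elsewhere in the article.
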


\begin{proof}
 Let $\ell_i = \L_{2m +1 -i}(\tau)$ for $i=0,\dots,2m$. Then \eqref{eq:gf_tau} gives the recurrence relation:
  \begin{gather*}
    \ell_0 = \tau, \quad \ell_1 =  \tau^2 \L_1,\\
    \ell_i = (\tau \L_1)\ell_{i-1} + \tau\ell_{i-2} \;\text{ for } 2\le i \le 2m. 
  \end{gather*}
Using \cite[Prop.~1]{BGW14b} and the fact that $B_{n,j}(c_1,2c_2,0,\dots)=\frac{n!}{j!} \binom{j}{n-j} c_1^{2j-n}c_2^{n-j}$, we find a closed formula for $\ell_n$:
\begin{equation*}
  \ell_n = \sum_{j\ge n/2}^n \binom{j}{n-j} \tau^{j+1}\L_1^{2j-n}.
\end{equation*}
This implies
\begin{equation*}
  \L_1 = \ell_{2m} = \sum_{j=m}^{2m} \binom{j}{2m -j} \tau^{j+1}\L_1^{2j-2m}
  = \sum_{j=0}^{m} \binom{m + j}{m -j}\tau^{j+m+1} \L_1^{2j}, 
\end{equation*}
and similarly,
\begin{equation*}
  \L_2 = \ell_{2m-1} = \sum_{j=0}^{m-1} \binom{m + j}{m-j-1}\tau^{j+m+1} \L_1^{2j+1}.
\end{equation*}
\end{proof}

Since $L_1 = aUb^m$, we have $\L_1(\tau)=\tau^{m+1}\U(\tau)$, and Proposition~\ref{prop:L_gf} gives 
\begin{equation*}
  \tau^{m+1}\U(\tau) = \sum_{j=0}^{m} \binom{m + j}{m -j} \tau^{j+m+1}(\tau^{m+1}\U(\tau))^{2j}.
\end{equation*}
Thus
\begin{equation*}
  \U(\tau) = \sum_{j=0}^{m} \binom{m + j}{m -j} \left(\tau^{2m+3} \U^{2}(\tau)\right)^j.
\end{equation*}
Moreover, since the length of any nonempty word of valuation 0 with slope $\frac{2m+1}{2}$ must be a multiple of $2m+3$, the generating function $\U$ is of the form $1+\sum_{n=1}^\infty u_n \tau^{(2m+3)n}$, where $u_n$ denotes the number of words in $U$ of length $(2m+3)n$. 

Therefore, with the change of variables $t=\tau^{2m+3}$, and denoting the generating function again by $\U$, we obtain
\begin{equation} \label{eq:U(t)}
 \U(t) = 1+ \sum_{j=1}^{m} \binom{m + j}{m -j} t^j \U^{2j}(t).
\end{equation}
For a fixed $m\in\mathbb{N}$, let $\mu_j =  \binom{m+j}{m-j}$ for $j\ge 0$. Note that $\mu_j = 0$ for $j\ge m+1$.

\begin{theorem} \label{thm:un}
If $u_n$ is the coefficient of $t^n$ in $\U(t)$, then
\begin{equation} \label{eq:un}
  u_n = \sum_{k=1}^n \binom{2n}{k-1} \frac{(k-1)!}{n!} B_{n,k}(1!\mu_1,2!\mu_2,\dots).
\end{equation}
\end{theorem}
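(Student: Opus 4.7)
The plan is to apply Lagrange inversion after a substitution that brings \eqref{eq:U(t)} into standard form. Since $\U(0)=1$, the series $s(t):=t\U(t)^2$ satisfies $s(0)=0$ and $s'(0)=1$, so it is formally invertible in $t$. Setting $H(s):=\sum_{j=1}^{m}\mu_j s^j$, the right-hand side of \eqref{eq:U(t)} becomes $1+\sum_j\mu_j(t\U^2)^j=1+H(s)$, so $\U=1+H(s)$, while $s=t\U^2=t(1+H(s))^2$ has the Lagrange form $s=t\phi(s)$ with $\phi(s)=(1+H(s))^2$.

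Applying Lagrange inversion to $f(s)=1+H(s)=\U$, for $n\geq 1$ I would obtain
\[
  u_n \;=\; [t^n]\U \;=\; \frac{1}{n}\,[s^{n-1}]\,H'(s)\bigl(1+H(s)\bigr)^{2n}.
\]
Expanding the binomial as $\sum_k\binom{2n}{k}H(s)^k$ and using $H'(s)H(s)^k=\tfrac{1}{k+1}\tfrac{d}{ds}H(s)^{k+1}$ together with $[s^{n-1}]\tfrac{d}{ds}H(s)^{k+1}=n\,[s^n]H(s)^{k+1}$, the factor $n$ cancels the prefactor $1/n$. Reindexing $k\to k-1$ then yields
\[
  u_n \;=\; \sum_{k\geq 1}\frac{1}{k}\binom{2n}{k-1}\,[s^n]H(s)^k.
\]

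To finish, I would invoke the standard expansion of powers of a formal power series in terms of partial Bell polynomials. Writing $H(s)=\sum_{j\geq 1}(j!\mu_j)\,s^j/j!$, one has
\[
  H(s)^k \;=\; k!\sum_{n\geq k}B_{n,k}(1!\mu_1,2!\mu_2,\dots)\,\frac{s^n}{n!},
\]
so $[s^n]H(s)^k=\tfrac{k!}{n!}B_{n,k}(1!\mu_1,2!\mu_2,\dots)$. Substituting this back produces \eqref{eq:un}, with the upper limit $k\leq n$ automatic because $B_{n,k}=0$ for $k>n$. There is no serious obstacle: the only delicate point is justifying the change of variable $s=t\U^2$ as a legitimate formal substitution, which follows at once from $\U(0)=1$; the rest is bookkeeping between the binomial expansion, the Lagrange formula, and the Bell-polynomial identity.
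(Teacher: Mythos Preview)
Your proof is correct. The substitution $s=t\U^2$ legitimately reduces \eqref{eq:U(t)} to $s=t(1+H(s))^2$, Lagrange inversion applies verbatim with $f(s)=1+H(s)$ and $\phi(s)=(1+H(s))^2$, and the subsequent manipulations (the derivative trick $H'H^k=\tfrac{1}{k+1}(H^{k+1})'$ and the standard identity $[s^n]H^k=\tfrac{k!}{n!}B_{n,k}(1!\mu_1,2!\mu_2,\dots)$) are all routine and correctly executed.

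Your route is genuinely different from the paper's. The paper does not derive \eqref{eq:un} directly; instead it \emph{verifies} it: defining $v_n$ to be the right-hand side of \eqref{eq:un} and $\V(t)=1+\sum v_nt^n$, the authors show that $\V$ satisfies the same functional equation $\V=X(t\V^2)$ as $\U$, whence $\U=\V$ by uniqueness. That verification is carried out by expanding both $n!v_n$ and $n![t^n]X(t\V^2)$ via a battery of Bell-polynomial identities (a recurrence for $B_{n,k+1}$, Fa{\`a} di Bruno, and a nontrivial convolution identity quoted from an earlier paper of the authors). Your argument, by contrast, is a direct derivation via Lagrange inversion and is both shorter and self-contained: it needs only the textbook Lagrange formula and the definition of partial Bell polynomials, with no appeal to external convolution results. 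The paper's approach does have the side benefit that the same Bell-polynomial machinery is reused in the proof of the subsequent corollary on $\theta_n$, but for the theorem itself your method is the more natural and economical one.
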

\begin{proof}
With $X(t)=1+ \sum_{j=1}^{m} \mu_j t^j$, identity \eqref{eq:U(t)} means $\U(t)=X(t\U^2(t))$. Denote the right-hand side of \eqref{eq:un} by $v_n$ and let $\V(t)=1+\sum_{n=1}^\infty v_nt^n$.

\smallskip
We will prove that $\V(t)$ also satisfies $\V(t)=X(t\V^2(t))$, hence $\U(t)=\V(t)$ and so $u_n=v_n$ for every $n$.

\smallskip
First, using the identity (cf.\ \cite[Sec.~11.2, Eqn.~(11.11)]{Charalambides}) 
\[ B_{n, k+1}(z_1,z_2,\dots)  = \sum_{\ell=k}^{n-1} \tbinom{n-1}{\ell} z_{n-\ell} B_{\ell, k}(z_1,z_2,\dots), \] 
we have
\begin{align*}
  n!v_n &= \sum_{k=1}^n \tbinom{2n}{k-1} (k-1)! B_{n,k}(1!\mu_1,2!\mu_2,\dots) \\
  &= \sum_{k=1}^n \tbinom{2n}{k-1} (k-1)! 
  \sum_{\ell=k-1}^{n-1} \tbinom{n-1}{\ell} (n-\ell)! \mu_{n-\ell} B_{\ell,k-1}(1!\mu_1,\dots) \\ 
  &= \sum_{k=0}^{n-1} \tbinom{2n}{k} k! 
  \sum_{\ell=k}^{n-1} \tbinom{n-1}{\ell} (n-\ell)! \mu_{n-\ell} B_{\ell,k}(1!\mu_1,\dots) \\
  &= \sum_{\ell=0}^{n-1} \tbinom{n-1}{\ell} (n-\ell)! \mu_{n-\ell}  
  \sum_{k=0}^{\ell} \tbinom{2n}{k} k! B_{\ell,k}(1!\mu_1,\dots) \\
  &= n!\mu_n + \sum_{\ell=1}^{n-1} \tfrac{n!}{\ell!} \mu_{n-\ell}\, S_{n,\ell}(1!\mu_1,2!\mu_2,\dots),   
\end{align*}
where $S_{n,\ell}(1!\mu_1,2!\mu_2,\dots) = 2(n-\ell)\sum_{k=1}^{\ell} \binom{2n-1}{k-1} (k-1)! B_{\ell,k}(1!\mu_1,2!\mu_2,\dots)$.

\medskip
On the other hand, if $(w_n)$ is defined by $t\V^2(t) = \sum_{n=1}^\infty w_n t^n$, then Fa{\`a} di Bruno's formula (cf.\ \cite[Sec.~3.4, Theorem~A]{Comtet}) implies
\begin{equation*}
 n![t]^n X(t \V^2(t)) = \sum_{\ell=1}^n \ell! \mu_\ell B_{n,\ell}(1!w_1, 2!w_2,\dots).
\end{equation*}
Moreover, by Equation (3$\ell$) in \cite[Sec.~3.3]{Comtet}, and since $w_1=1$, we get
\begin{align*}
 B_{n,\ell}(1!w_1, 2!w_2,\dots) &= \sum_{k=2\ell-n}^\ell \tfrac{n!}{(n-\ell)! k!} B_{n-\ell,\ell-k}(1!w_2,2!w_3,\dots) \\
 &= \sum_{k=0}^{n-\ell} \tfrac{n!}{(n-\ell)! (\ell-k)!} B_{n-\ell,k}(1!w_2,2!w_3,\dots).
\end{align*}
Therefore,
\begin{align*}
 n![t]^n X(t \V^2(t)) 
 &= \sum_{\ell=1}^n \ell! \mu_\ell \sum_{k=0}^{n-\ell} \tfrac{n!}{(n-\ell)! (\ell-k)!} B_{n-\ell,k}(1!w_2,2!w_3,\dots) \\
 &= \sum_{\ell=0}^{n-1} (n-\ell)! \mu_{n-\ell} \sum_{k=0}^{\ell} \tfrac{n!}{\ell! (n-\ell-k)!} B_{\ell,k}(1!w_2,2!w_3,\dots) \\
 &= n!\mu_n + \sum_{\ell=1}^{n-1} \tfrac{n!}{\ell!} \mu_{n-\ell}\, T_{n,\ell}(1!w_2,2!w_3,\dots),
\end{align*}
where $T_{n,\ell}(1!w_2,2!w_3,\dots) = \sum_{k=1}^{\ell} \binom{n-\ell}{k}k! B_{\ell,k}(1!w_2,2!w_3,\dots)$. 

\medskip
Now, using \cite[Sec.~3.5, Thm.~B]{Comtet} we can write $\left(\V^2(t)\right)^{n-\ell}$ and $\V^{2n-2\ell}(t)$ in terms of partial Bell polynomials as follows:
\begin{align*}
  \big(1+w_2t+w_3 t^2+\cdots\big)^{n-\ell} 
  &= 1 + \sum_{\ell=1}^\infty  \sum_{k=1}^\ell \tbinom{n-\ell}{k}\tfrac{k!}{\ell!} B_{\ell,k}(1!w_2,2!w_3,\dots) t^\ell, \\
  \big(1+v_1t+v_2 t^2+\cdots\big)^{2n-2\ell}
  &= 1 + \sum_{\ell=1}^\infty  \sum_{k=1}^\ell \tbinom{2n-2\ell}{k}\tfrac{k!}{\ell!} B_{\ell,k}(1!v_1,2!v_2,\dots) t^\ell.
\end{align*}
This implies $T_{n,\ell}(1!w_2,2!w_3,\dots) = \ell![t^\ell] \left(\V^2(t)\right)^{n-\ell} = \ell![t^\ell] \V^{2n-2\ell}(t)$, and so
\begin{align*}
 T_{n,\ell}(1!w_2,2!w_3,\dots)
 &= \sum_{k=1}^{\ell} \tbinom{2n-2\ell}{k} k! B_{\ell,k}(1!v_1,2!v_2,\dots) \\
 &= 2(n-\ell)\sum_{k=1}^{\ell} \tbinom{2n-2\ell-1}{k-1} (k-1)! B_{\ell,k}(1!v_1,2!v_2,\dots).
\end{align*}

Finally, applying \cite[Theorem~15]{BGW12} with $\lambda=2n-2\ell-1$ on the expression
\begin{equation*}
  \ell! v_\ell = \sum_{k=1}^\ell \tbinom{2\ell}{k-1} (k-1)! B_{\ell,k}(1!\mu_1,2!\mu_2,\dots),
\end{equation*}
we get
\begin{equation*}
 \sum_{k=1}^{\ell} \tbinom{2n-2\ell-1}{k-1} (k-1)! B_{\ell,k}(1!v_1,\dots) = \sum_{k=1}^{\ell} \tbinom{2n-1}{k-1} (k-1)! B_{\ell,k}(1!\mu_1,\dots).
\end{equation*}
Multiplying both sides by $2(n-\ell)$, we finally obtain
\[ T_{n,\ell}(1!w_2,2!w_3,\dots)=S_{n,\ell}(1!\mu_1,2!\mu_2,\dots), \] 
which implies $\V(t)=X(t\V^2(t))$. 
\end{proof}

\bigskip
Let us now address the enumeration of the set $D$ of factor-free Dyck words with slope $\frac{2m+1}{2}$. Let $\D$ denote the generating function of $D$. As discussed in Section~\ref{sec:preliminaries}, we have 
\begin{equation*} 
  D = \varepsilon + L_1^2 b + L_2b.
\end{equation*}
Thus Proposition~\ref{prop:L_gf} together with the factorization $\L_1(\tau)=\tau^{m+1}\U(\tau)$ give
\begin{align*}
  \D &= 1+\tau (\tau^{m+1}\U(\tau))^2 + \sum_{j=0}^{m-1} 
  \tbinom{m + j}{m-j-1}\tau^{j+m+2} (\tau^{m+1}\U(\tau))^{2j+1}(\tau) \\
 &= 1+\tau^{2m+3}\U^2(\tau) + \sum_{j=0}^{m-1} \tbinom{m + j}{m-j-1}\tau^{(2m+3)(j+1)} \U^{2j+1}(\tau) \\
 &= 1+\tau^{2m+3}\U^2(\tau) + \sum_{j=1}^{m} \tbinom{m + j-1}{m-j}\tau^{(2m+3)j} \U^{2j-1}(\tau).
\end{align*}

Setting again $t=\tau^{2m+3}$ and denoting the generating functions with the same letters ($\U$ and $\D$, respectively) but as functions of $t$, we arrive at
\begin{equation} \label{eq:D(t)}
    \D(t)  = 1+ t\U^2(t) +   \sum_{j=1}^{m} \binom{m+j-1}{m-j} t^{j} \U^{2j-1}(t).
\end{equation}

As a consequence of \eqref{eq:D(t)} and Theorem~\ref{thm:un}, we obtain:
\begin{corollary} \label{cor:dn}
If we write $\D(t) = 1 + \sum_{n\ge 1} \theta_n t^n$, then
\begin{equation*}
  \theta_n = \!\! \sum_{\ell=0}^{\min(m, n-1)} \binom{m+\ell+1}{m-\ell} \Delta_{n-\ell-1,\ell},
\end{equation*}
where
\begin{equation*}
  \Delta_{\nu,\ell} = \frac{2\ell+1}{2\nu+2\ell+1}
  \sum_{k=0}^{\nu} \binom{2\nu+2\ell+1}{k} \frac{k!}{\nu!} B_{\nu,k}(1!\mu_1,2!\mu_2,\dots).
\end{equation*}
\end{corollary}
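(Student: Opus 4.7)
The plan is to first rewrite $\D(t)$ from \eqref{eq:D(t)} as a single sum whose terms all have the shape $t^{\ell+1}\U^{2\ell+1}(t)$, and then extract the coefficient of $t^n$ by computing $[t^\nu]\U^{2\ell+1}(t)$ via Lagrange inversion applied to the functional equation $\U(t) = X(t\U^2(t))$, where $X(y) = 1 + \sum_{j=1}^{m}\mu_j y^j$ as in \eqref{eq:U(t)}. The claim will follow once these two pieces are matched against the definition of $\Delta_{\nu,\ell}$.

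For the first step I would multiply the identity $\U(t) - 1 = \sum_{j=1}^{m}\mu_j t^j \U^{2j}(t)$ through by $\U(t)$ to obtain
\[ t\U^2(t) = t\U(t) + \sum_{j=1}^{m}\mu_j\, t^{j+1}\U^{2j+1}(t), \]
and substitute into \eqref{eq:D(t)}. Reindexing the new sum by $\ell = j$ (running over $1 \le \ell \le m$) and the existing sum of \eqref{eq:D(t)} by $\ell = j-1$ (running over $0 \le \ell \le m-1$), then combining on the overlap via Pascal's identity $\binom{m+\ell}{m-\ell} + \binom{m+\ell}{m-\ell-1} = \binom{m+\ell+1}{m-\ell}$, should collapse everything into
\[ \D(t) = 1 + \sum_{\ell=0}^{m} \binom{m+\ell+1}{m-\ell}\, t^{\ell+1}\U^{2\ell+1}(t), \]
after verifying the two boundary cases: at $\ell = 0$ the leftover $t\U(t)$ combines with $\binom{m}{m-1} = m$ to give $\binom{m+1}{m}$, and at $\ell = m$ only $\mu_m = 1 = \binom{2m+1}{0}$ contributes.

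For the second step I would set $Y(t) = t\U^2(t)$, so that $\U = X(Y)$ is equivalent to $Y = tX(Y)^2$. Lagrange inversion with $\phi(y) = X(y)^2$ and $H(y) = X(y)^r$ then gives
\[ [t^\nu]\U^r(t) = \frac{r}{\nu}[y^{\nu-1}] X'(y)X(y)^{2\nu+r-1} = \frac{r}{2\nu+r}[y^\nu]X(y)^{2\nu+r}, \]
the second equality coming from rewriting $X'X^{2\nu+r-1}$ as $(X^{2\nu+r})'/(2\nu+r)$ and reading off the resulting coefficient. Expanding $X(y)^{2\nu+r}$ by the same partial Bell polynomial formula used in the proof of Theorem~\ref{thm:un} and specializing to $r = 2\ell+1$ reproduces the definition of $\Delta_{\nu,\ell}$, yielding $[t^\nu]\U^{2\ell+1}(t) = \Delta_{\nu,\ell}$. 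Reading off $[t^n]$ from the unified expression for $\D(t)$ then gives the stated formula, with the range $0 \le \ell \le \min(m, n-1)$ coming from $\ell \le m$ together with $n - \ell - 1 \ge 0$.

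The main obstacle is essentially clerical: checking that the reindexing and the two boundary cases at $\ell = 0$ and $\ell = m$ genuinely assemble into the single binomial $\binom{m+\ell+1}{m-\ell}$. Once that reduction is in place, the Lagrange inversion step is routine and immediately matches the definition of $\Delta_{\nu,\ell}$.
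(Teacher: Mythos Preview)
Your argument is correct and is in fact considerably cleaner than the paper's own proof. The paper does \emph{not} first merge $t\U^2(t)$ with the sum $\sum_j \binom{m+j-1}{m-j}t^j\U^{2j-1}(t)$ into a single $\sum_\ell \binom{m+\ell+1}{m-\ell}t^{\ell+1}\U^{2\ell+1}(t)$; instead it treats the two pieces separately, expanding each in partial Bell polynomials and invoking an external convolution identity (\cite[Theorem~15]{BGW12}) twice, once with $\lambda=1$ for the quadratic piece and once with $\lambda=2\ell-2$ for the odd powers, together with recurrences (11.11)--(11.12) from \cite{Charalambides}, before recombining. Your route replaces all of that machinery with two elementary moves: the algebraic rewriting via $\U(\U-1)=\sum_j\mu_j t^j\U^{2j+1}$ plus Pascal's rule, and then a single Lagrange--B\"urmann step on $Y=tX(Y)^2$ to read off $[t^\nu]\U^{2\ell+1}=\Delta_{\nu,\ell}$ directly. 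The paper's approach has the virtue of staying entirely inside the Bell-polynomial calculus developed for Theorem~\ref{thm:un}, but yours is shorter, self-contained, and makes the structural reason for the coefficient $\binom{m+\ell+1}{m-\ell}$ transparent. One cosmetic point: when you say ``multiply through by $\U(t)$'' you also tacitly multiply by $t$ to land on $t\U^2(t)$; and you should note that the Lagrange computation is valid for $\nu\ge 1$ while the boundary case $\nu=0$ (i.e.\ $\Delta_{0,\ell}=1=[t^0]\U^{2\ell+1}$) is immediate.
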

\begin{proof}
We use a similar strategy as for the proof of Theorem~\ref{thm:un}. Here we write the powers of $\U(t)$ in terms of partial Bell polynomials, and then apply \cite[Theorem~15]{BGW12} to rewrite all expressions in terms of the sequence $\mu_1,\mu_2,\dots$. For convenience, let $!\mu$ denote the sequence $(1!\mu_1,2!\mu_2,\dots)$. 

We will work with $\D(t)$ as given in \eqref{eq:D(t)}. First, let us consider $t\U^2(t)$. Using \eqref{eq:un} together with \cite[Sec.~3.5, Theorem~B]{Comtet} and \cite[Theorem~15]{BGW12} with $\lambda=1$, we obtain
\begin{align*}
 [t^n] t\U^2(t) &= \sum_{k=1}^{n-1} 2\tbinom{2n - 1}{k-1} \tfrac{(k-1)!}{(n-1)!} B_{n-1,k}(!\mu) \\
 &= \sum_{k=1}^{n-1}\left(\tbinom{2n - 2}{k-1} + \tfrac{2n+k-2}{2n-1}\tbinom{2n - 1}{k-1}\right) 
 \tfrac{(k-1)!}{(n-1)!}B_{n-1,k}(!\mu) \\
 &= \sum_{k=1}^{n-1}\tfrac{1}{2n-1}\tbinom{2n - 1}{k} \tfrac{k!}{(n-1)!}B_{n-1,k}(!\mu) \\
 &\hspace{2em} + \sum_{k=0}^{n-2} \tfrac{1}{2n-1}\tbinom{2n - 1}{k}\tfrac{k!}{(n-1)!} 
 \big(2(n-1)+k+1\big)B_{n-1,k+1}(!\mu).
\end{align*}
Now, using identities \cite[Sec.~11.2, Eqns.~(11.11) \& (11.12)]{Charalambides} we get
\begin{equation*}
 \big(2(n-1)+k+1\big)B_{n-1,k+1}(!\mu) 
 = \sum_{\ell=1}^{n-k-1} (2\ell+1) \tbinom{n-1}{\ell} \ell!\mu_\ell B_{n-1-\ell, k}(!\mu),
\end{equation*}
and thus $[t^n] t\U^2(t)$ can be written as
\begin{equation*}
 [t^n] t\U^2(t) = \sum_{\ell=0}^{n-1}\tfrac{2\ell+1}{2n-1} \tbinom{m+\ell}{m-\ell}
 \sum_{k=0}^{n-\ell-1} \tbinom{2n - 1}{k}\tfrac{k!}{(n-1-\ell)!} B_{n-\ell-1, k}(!\mu).
\end{equation*}

\medskip
Similarly, but now applying \cite[Theorem~15]{BGW12} with $\lambda=2\ell-2$, we obtain
\begin{equation*}
  [t^n] t^\ell \U^{2\ell - 1}(t) 
  = \tfrac{2\ell-1}{2n-1} \sum_{k=1}^{n-\ell} \tbinom{2n -1}{k} \tfrac{k!}{(n-\ell)!} B_{n-\ell,k}(!\mu).
\end{equation*}
This implies
\begin{align*}
 [t^n] &\sum_{\ell=1}^m \!\tbinom{m+\ell-1}{m-\ell} t^{\ell} \U^{2\ell-1} 
  = \!\sum_{\ell=0}^{m-1} \tfrac{2\ell+1}{2n-1} \tbinom{m+\ell}{m-\ell-1}
  \!\sum_{k=1}^{n-\ell-1}\! \tbinom{2n -1}{k} \tfrac{k!}{(n-\ell-1)!} B_{n-\ell-1,k}(!\mu).
\end{align*}
Finally, the corollary follows by combining the above formulas.
\end{proof}

\section{Examples: Factor-free words with slope 3/2 and 5/2}
\label{sec:examples}

The purpose of this section is to illustrate Theorem~\ref{thm:un} for factor-free words with slope $\frac32$ and $\frac52$. In the case of slope $\frac32$, we recover the results obtained by Duchon \cite[Sec.~6.3]{Duchon} who observed that $(u_n)$ is the sequence of Catalan numbers $(C_n)$. Indeed, by Theorem~\ref{thm:un}, the number of words of length $5n$ in $U_{\frac32}$ is given by
\begin{equation*}
  u_n = \sum_{k=1}^n \tbinom{2n}{k-1} \tfrac{(k-1)!}{n!} B_{n,k}(1,0,\dots) 
  =  \binom{2n}{n-1} \frac{(n-1)!}{n!} = C_n.
\end{equation*}

Moreover, since $\U(t) = 1+ t \U^{2}(t)$ by \eqref{eq:U(t)}, identity \eqref{eq:D(t)} implies
 \begin{equation*}
    \D(t)  = 1+ t\U^2(t) + t\U(t) = \U(t) + t\U(t).
\end{equation*}
Thus the number of factor-free Dyck words with slope $\frac32$ and length $5n$ is given by
\begin{equation*}
  \theta_n = u_n+u_{n-1} = C_n+C_{n-1},
\end{equation*}
as already established in \cite[Proposition~5]{Duchon}.

\bigskip
Let us now look at the set $U$ for the case $m=2$ (slope $\frac52$). 

By Theorem~\ref{thm:un}, the number of words of length $7n$ in $U_{\frac52}$ is given by
\begin{align*}
  u_n &= \sum_{k=1}^n \tbinom{2n}{k-1} \tfrac{(k-1)!}{n!} B_{n,k}(3, 2, 0, \dots) \\
  &= \sum_{k=\lceil n/2\rceil}^n \frac1{k} \binom{2n}{k-1} \binom{k}{n-k} 3^{2k-n} \\
  &= \frac{1}{2n+1} \sum_{k=\lceil n/2\rceil}^n \binom{2n+1}{k} \binom{k}{n-k} 3^{2k-n},
\end{align*}
which gives the numbers {\small 3, 19, 153, 1390, 13581, 139315, 1479855,}\,\dots

Moreover, by Corollary~\ref{cor:dn}, the number of factor-free Dyck words with slope $\frac52$ and length $7n$ is given by
\begin{equation*}
  \theta_n = \!\! \sum_{\ell=0}^{\min(2, n-1)} \binom{\ell+3}{2-\ell} \Delta_{n-\ell-1,\ell},
\end{equation*}
where
\begin{equation*}
 \Delta_{\nu,\ell} = \frac{2\ell+1}{2\nu+2\ell+1} \sum_{k=0}^{\nu} \binom{2\nu+2\ell+1}{k} \binom{k}{\nu-k}3^{2k-\nu}.
\end{equation*}
Hence $\theta_1=3$, $\theta_2=13$, and for $n\ge3$ we have
\begin{equation*}
  \theta_n = \!\! \sum_{\ell=0}^{2} \binom{\ell+3}{2-\ell} \Delta_{n-\ell-1,\ell} 
  = 3\Delta_{n-1,0} + 4\Delta_{n-2,1} + \Delta_{n-3,2}
\end{equation*}
This gives the sequence {\small 3, 13, 94, 810, 7667, 76998, 805560,} \dots, \cite[A274052]{Sloane}.

\subsection*{Combinatorial interpretation of $U_{\frac52}$}
As discussed in Example~\ref{ex:slope5/2}, the language $U=U_{\frac52}$ has the grammar
\begin{equation*}
  U = \varepsilon + aUbbb aUbbb aUbbb aUb + baUbbbaUb + aUbbbbaUb + aUbbbaUbb.
\end{equation*}
This suggests a natural bijection to rooted planar trees. Specifically:
\begin{quote}
The words of length $7n$ in $U_{\frac52}$ are in one-to-one correspondence with rooted trees with $2n$ edges having nonleaf nodes of outdegrees 2 or 4, where nodes of outdegree 2 may be colored in three different ways. 
\end{quote}
We finish this section with a discussion of this bijection. 

Note that $babbbab$, $abbbbab$, $abbbabb$, and $abbbabbbabbbab$, are the basic words needed to build all other words in $U$. We identify these building blocks with colored trees as follows:

\begin{center}
\begin{tikzpicture}[scale=0.8]
\node at (-3,-0.5) {$babbbab\; \longleftrightarrow$};
\draw[fill] (0,0)-- (-1,-1) circle(2pt); \draw[fill] (0,0) -- (1,-1) circle(2pt); \draw[fill,cyan] (0,0) circle(3pt);
\node[left=1pt,gray!80] at (-0.4,-0.4) {\scriptsize $ba$};
\node[gray!80] at (0,-0.6) {\scriptsize $b^3a$};
\node[right=1pt,gray!80] at (0.4,-0.4) {\scriptsize $b$};
\end{tikzpicture}
\qquad
\begin{tikzpicture}[scale=0.8]
\node at (-3,-0.5) {$abbbbab\; \longleftrightarrow$};
\draw[fill] (0,0)-- (-1,-1) circle(2pt); \draw[fill] (0,0) -- (1,-1) circle(2pt); \draw[fill,red] (0,0) circle(3pt);
\node[left=1pt,gray!80] at (-0.4,-0.4) {\scriptsize $a$};
\node[gray!80] at (0,-0.6) {\scriptsize $b^4a$};
\node[right=1pt,gray!80] at (0.4,-0.4) {\scriptsize $b$};
\end{tikzpicture}
\qquad
\begin{tikzpicture}[scale=0.8]
\node at (-3,-0.5) {$abbbabb\; \longleftrightarrow$};
\draw[fill] (0,0)-- (-1,-1) circle(2pt); \draw[fill] (0,0) -- (1,-1) circle(2pt); \draw[fill,green] (0,0) circle(3pt);
\node[left=1pt,gray!80] at (-0.4,-0.4) {\scriptsize $a$};
\node[gray!80] at (0,-0.6) {\scriptsize $b^3a$};
\node[right=1pt,gray!80] at (0.4,-0.4) {\scriptsize $b^2$};
\end{tikzpicture}

\vspace{4ex}
\begin{tikzpicture}[scale=0.8]
\node at (-4,-0.5) {$abbbabbbabbbab\; \longleftrightarrow$};
\draw[fill] (0,0) circle(3pt) -- (-1.5,-1) circle(2pt); 
\draw[fill] (0,0) -- (-0.5,-1) circle(2pt); \draw[fill] (0,0) -- (0.5,-1) circle(2pt); \draw[fill] (0,0) -- (1.5,-1) circle(2pt);
\node[left=1pt,gray!80] at (-0.65,-0.4) {\scriptsize $a$};
\node[gray!80] at (-0.82,-0.8) {\tiny $b^3a$};
\node[gray!80] at (0,-0.8) {\tiny $b^3a$};
\node[gray!80] at (0.9,-0.8) {\tiny $b^3a$};
\node[right=1pt,gray!80] at (0.7,-0.4) {\scriptsize $b$};
\end{tikzpicture}
\end{center}

The grammar of $U$ implies that any word of length $7n$ can be formed by inserting (right after an $a$):
\begin{itemize}
\itemsep0pt
\item[$(i)$] a word of length $7(n-1)$ into one of the three words of length 7, 
\item[$(ii)$] or a word of length $7(n-2)$ into the word $abbbabbbabbbab$.
\end{itemize}
Therefore, an `$a$' or a `$ba$' that is not part of a $bbba$ string is always followed by one element of the set of subwords $\{a, ba, bbba, bbbba\}$.

For any word $u\in U$ of length $7n$, we will construct a tree with the properties stated in the bijection. To this end, traverse the word from left to right and do the following (labeling the left edges accordingly): 
\begin{itemize}
\itemsep0pt
\item[$\circ$] For every $a$ or $ba$ that is not part of a $bbba$ string, draw a left edge and move to the leaf just created. If the edge is labeled with `$ba$', color the parent node blue.
\item[$\circ$] For every $bbba$ that is not part of a $bbbba$ string, draw a right edge from the parent node and move to the leaf just created.
\item[$\circ$] For every $bbbba$, there are two possible steps:
\begin{itemize}
\itemsep0pt
\item If the edge created last was a left edge, draw a right edge from the parent node, move to the leaf just created, and color the parent node red.
\item Otherwise, draw a right edge from the {\em grandparent} node and move to the leaf just created.
\end{itemize}
\item[$\circ$] For every $bb$ that is not part of a $bbba$ or a $bbbba$ string, move to the grandparent node unless the current node is a leaf of a binary subtree whose left edge is labeled with an `$a$'. In the latter case, just move to the parent node and color it green.
\item[$\circ$] For every $b$ that doesn't fall into any of the previous cases, move to the parent node.
\end{itemize}
Since every appearance of $a$ is responsible for the creation of an edge, a word in $U$ of length $7n$ corresponds to a tree with $2n$ edges.

\smallskip
The reverse algorithm is clear. Given a colored tree with $2n$ edges, label each of the four building subtrees according to the identification given above. Then traverse the tree counterclockwise, starting at the root, and record the labels writing the letters from left to right. Depending on the color of the nodes, write $a$ or $ba$ when traveling down along a left edge, write $bbba$ or $bbbba$ when traveling up and down between adjacent edges, and write $b$ or $bb$ when traveling up along a right edge. The resulting word has length $7n$ and belongs to $U_{\frac52}$.

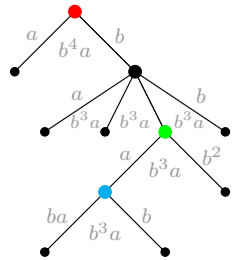
\begin{figure}[!ht]
\begin{center}
\begin{tikzpicture}[scale=0.8]
\draw[fill] (-1,1)-- (-2,0) circle(2pt); \draw[fill] (-1,1) -- (0,0); \draw[fill,red] (-1,1) circle(3pt);
\draw[fill] (0,0) circle(3pt) -- (-1.5,-1) circle(2pt); 
\draw[fill] (0,0) -- (-0.5,-1) circle(2pt); \draw[fill] (0,0) -- (0.5,-1); \draw[fill] (0,0) -- (1.5,-1) circle(2pt);
\draw[fill] (0.5,-1)-- (-0.5,-2) circle(2pt); \draw[fill] (0.5,-1) -- (1.5,-2) circle(2pt); 
\draw[fill,green] (0.5,-1) circle(3pt);
\draw[fill] (-0.5,-2)-- (-1.5,-3) circle(2pt); \draw[fill] (-0.5,-2) -- (0.5,-3) circle(2pt); 
\draw[fill,cyan] (-0.5,-2) circle(3pt);
\node[left=1pt,gray!80] at (-1.4,0.6) {\scriptsize $a$};
\node[gray!80] at (-1,0.4) {\scriptsize $b^4a$};
\node[right=1pt,gray!80] at (-0.55,0.6) {\scriptsize $b$};
\node[left=1pt,gray!80] at (-0.65,-0.4) {\scriptsize $a$};
\node[gray!80] at (-0.82,-0.8) {\tiny $b^3a$};
\node[gray!80] at (0,-0.8) {\tiny $b^3a$};
\node[gray!80] at (0.9,-0.8) {\tiny $b^3a$};
\node[right=1pt,gray!80] at (0.8,-0.4) {\scriptsize $b$};
\node[left=1pt,gray!80] at (0.15,-1.4) {\scriptsize $a$};
\node[gray!80] at (0.5,-1.6) {\scriptsize $b^3a$};
\node[right=1pt,gray!80] at (0.9,-1.4) {\scriptsize $b^2$};
\node[left=1pt,gray!80] at (-0.9,-2.4) {\scriptsize $ba$};
\node[gray!80] at (-0.5,-2.65) {\scriptsize $b^3a$};
\node[right=1pt,gray!80] at (-0.1,-2.4) {\scriptsize $b$};
\end{tikzpicture}
\end{center}
\caption{Tree representation of $abbbbaabbbabbbaababbbabbbbabbbbbabb$.}
\end{figure}

\section{Further remarks and applications}

\subsection{Slope \texorpdfstring{$\frac72$}{7/2}}
If $m=3$, we get $\mu_1=\binom{4}{2}=6$, $\mu_2=\binom{5}{1}=5$, and $\mu_3=\binom{6}{0}=1$. Thus there are 12 basic words that may be used as building blocks to create all words in $U_{\frac72}$:
\begin{center}
\begin{tabular}{p{3cm}c}
 $abbbbabbb$ & $abbbbabbbbabbbbabb$ \\
 $abbbbbabb$ & $abbbbabbbbabbbbbab$ \\
 $abbbbbbab$ & $abbbbabbbbbabbbbab$ \\
 $babbbbabb$ & $abbbbbabbbbabbbbab$ \\
 $babbbbbab$ & $babbbbabbbbabbbbab$ \\
 $bbabbbbab$ & $abbbbabbbbabbbbabbbbabbbbab$ 
\end{tabular}
\end{center}

\subsection{Connection to colored Dyck paths}
Combining equation \eqref{eq:un} with \cite[Theorem~3.5]{BGMW}, we conclude that there is a one-to-one correspondence between the words in $U_{\frac{2m+1}{2}}$ of length $(2m+3)n$ and the set of Dyck words of semilength $2n$ created from strings of the form `$d$' and `$u^{2j}d$' for $j=1,\ldots,\min(m,n)$, such that each maximal $2j$-ascent may be colored in $\mu_j$ different ways. 
For example, for $m=2$ this means that there is a bijection between the words in $U_{\frac52}$ of length $7n$ and the set of Dyck words of semilength $2n$ with ascents of length 2 or 4 and such that each ascent of length 2 may be colored in three different ways. 

\subsection{Factor-free words}
The importance of factor-free Dyck words relies on the fact that, as shown in \cite[Section~4]{Duchon}, words in a generalized Dyck language can be obtained uniquely by inserting words of the language into factor-free words of the same language. In particular, in the case of a two-letter alphabet, for which words correspond to rational Dyck paths, Duchon \cite[Theorem~9]{Duchon} established a direct generating function connection between rational Dyck paths and their subset of factor-free elements. 

As discussed in this paper, when the slope of the Dyck words is $\frac{2m+1}{2}$, understanding the auxiliary language $U$ introduced in Section~\ref{sec:preliminaries} suffices to generate and enumerate the set of corresponding factor-free Dyck words with same slope. An explicit enumeration formula is given in \eqref{eq:un}. The language $U$ turned out to have interesting combinatorial properties, which we have illustrated for the cases of slope $\frac32$ and $\frac52$.

\subsection{Cross-bifix-free codes}
By definition, if $w$ is a factor-free Dyck word in $\Dyck_{\A,h}$, then for any representation $w=w_1w_2$ with nonempty subwords $w_1$ and $w_2$, we must have $h(w_1)>0$ and $h(w_2)<0$. Therefore, no prefix of any length of any factor-free word is the suffix of any other factor-free word. This means that any set of factor-free words in $\Dyck_{\A,h}$ is a cross-bifix-free set, and in the case of slope $\frac{2m+1}{2}$, we can generate these sets using the auxiliary language $U$ and the derivation rules \eqref{eq:U}.

For example, the set of factor-free Dyck words over the alphabet $\{0,1\}$ with $h(0)=3$ and $h(1)=-2$ (slope $\frac32$) gives a cross-bifix-free set (non-overlapping code) of binary words with variable length $\equiv 0$ modulo $5$. Here are the words in $D_{\frac32}$ of length 5, 10, and 15:
\begin{align*}
 & 00111, 01011, \\ 
 & 0100110111, 0001101111, 0011011011, \\
 & 010011001101111, 010001101110111, 001101100110111, 000110011011111, \\
 & 001100110111011, 000011011101111, 000110111011011,
\end{align*}
and here are the nineteen words in $D_{\frac32}$ of length 20:
\begin{align*}
& 01001100110011011111, 01001100011011101111, 01000110111001101111, \\
& 01000110011011110111, 01000011011101110111, 00110110011001101111, \\
& 00110110001101110111, 00110011011100110111, 00011011101100110111, \\
& 00011001100110111111, 00110011001101111011, 00011000110111011111, \\
& 00110001101110111011, 00001101110011011111, 00011011100110111011, \\
& 00001100110111101111, 00011001101111011011, 00000110111011101111, \\
& 00001101110111011011.
\end{align*}

\medskip
Of course, similar sets can be constructed by means of $U$ for binary Dyck words with slope $\frac{2m+1}{2}$. However, if we require words in the set to have a fixed length (as it is customary in the literature, see e.g.\ \cite{BPP12,Blackburn}), the codes obtained with these slopes may be too restrictive. Nonetheless, rational lattice paths of other slopes may be used to produce larger codes along the lines of those given by Bilotta et al.\ \cite{BPP12}. For instance, there is a straightforward bijection between the set $CBFS_2(2m+1)$ constructed in op.\ cit.\ and the set of rational Dyck paths from $(0,0)$ to $(m,m+1)$.


\end{document}